\documentclass[10pt]{amsart}

\usepackage{amsmath}

\usepackage{bbm}
\usepackage{enumerate}
\usepackage{paralist}
\usepackage{amssymb}

\usepackage{stmaryrd}

\newtheorem{theorem}{Theorem}

\newtheorem{lemma}[theorem]{Lemma}
\newtheorem{corollary}[theorem]{Corollary}

\theoremstyle{definition}

\theoremstyle{remark}

\newcommand \NN{\mathbbm{N}}

\newcommand \ZZ{\mathbbm{Z}}

\newcommand {\gen}[1]{\langle #1 \rangle}
\newcommand {\clg}[1]{\overline{\left\langle #1 \right\rangle}}

\begin{document}

\title[Abelian pro-countable groups]{Abelian pro-countable groups and non-Borel orbit equivalence relations}

\author{Maciej Malicki}

\address{Department of Mathematics and Mathematical Economics, Warsaw School of Economics, al. Niepodleglosci 162, 02-554,Warsaw, Poland}
\email{mamalicki@gmail.com}

\subjclass[2000]{54H11, 54H05}
\keywords{pro-countable groups, abelian groups, orbit equivalence relations}
 
\begin{abstract}
We study topological groups that can be defined as Polish, pro-countable abelian groups, as non-archimedean abelian groups or as quasi-countable abelian groups, i.e., Polish subdirect products of countable, discrete groups, endowed with the product topology. We characterize tame groups in this class, i.e., groups all of whose continuous actions on a Polish space induce a Borel orbit equivalence relation, and relatively tame groups, i.e., groups all of whose diagonal actions $\alpha \times \beta$ induce a Borel orbit equivalence relation, provided that $\alpha$, $\beta$ are continuous actions inducing Borel orbit equivalence relations.
\end{abstract}

\maketitle
\section{Introduction}

Let G be a Polish group acting on a Polish space $X$ via a continuous action $\alpha:G \times X \rightarrow X$. Define
\[ x \, E_\alpha y \ \Leftrightarrow \ \exists g \in G \ \alpha(g,x) = y \]
for $x , y \in X$. Then $E_\alpha \subseteq X \times X$ is an equivalence relation called the \emph{orbit equivalence relation} induced by the action $\alpha$. It is natural to ask to what extent the topological and algebraic structure of $G$ determines the descriptive complexity of $E_\alpha$. Thus, a Polish group $G$ is called \emph{tame} if every orbit equivalence relation induced by a continuous action of $G$ is Borel. It is called \emph{relatively tame} if every diagonal action $\alpha \times \beta$ of $G$ induces a Borel orbit equivalence relation, provided that the actions $\alpha$ and $\beta$ of $G$ are continuous, and they induce Borel orbit equivalence relations.

It is well known that $E_\alpha$ is analytic for every Polish group $G$; if additionally $G$ is locally compact, $E_\alpha$ must be Borel, that is, $G$ is tame. As far as non-locally compact groups are concerned, S.Solecki \cite{Sol2} characterized tameness for Polish groups of the form $\prod_n G_n$, where each $G_n$ is abelian and discrete. Later, L.Ding and S.Gao \cite{DiGa} proved that in this class a group is tame if and only if it is relatively tame.

In this paper, we obtain analogous results for a larger class of abelian groups which can be characterized in the following three ways. First, they can be defined as Polish, \emph{pro-countable} abelian groups, that is, inverse limits of countable systems of discrete, countable abelian groups. Also, they can be viewed as \emph{non-archimedean} abelian  groups, that is, Polish abelian groups with a neighborhood basis at the identity consisting of open subgroups. Finally, they can be defined as \emph{quasi-countable} abelian groups, that is, closed, countable subdirect products of discrete, countable abelian groups, endowed with the product topology. Actually, we will be referring to them as quasi-countable groups because this definition is the most natural, and the most general in case we want to put some additional requirements on their structure. For example, in this note we will be also considering quasi-divisible groups, quasi-reduced groups, etc.

We prove (in Theorem \ref{th:Main:Red}) that an abelian quasi-reduced group $G$ is tame if it contains an open quasi-torsion group $H$ such that all Sylow subgroups of $H$ are locally compact. Otherwise, $G$ is not tame nor relatively tame. Because every abelian quasi-countable group $G$ can be written as $G=R \oplus \prod_n D_n$, where $R$ is quasi-reduced, and $D_n$, $n \in \NN$, are discrete and divisible (see Lemma \ref{le:Decom}), this gives rise to a complete characterization of tameness and relative tameness for abelian quasi-countable groups, provided in Theorem \ref{th:Main}.
 
\section{Terminology and basic facts}
All the groups considered in this paper are abelian. A topological space is \emph{Polish} if it is separable and completely metrizable. A topological group is Polish if its group topology is Polish. It is well known (see \cite[Theorem 3.5.2]{Gao} and \cite[Exercise 3.5.1]{Gao}) that every continuous action $\beta$ of a Polish subgroup $H $ of a Polish group $G$ on a Polish space $X$ gives rise to a continuous action $\alpha$ of $G$ such that $E_\beta$ is Borel iff $E_\alpha$ is Borel.

Let $\mathcal{D}$ be a class of discrete, countable groups. A Polish group $G$ is called \emph{quasi}-$\mathcal{D}$ if it is a subdirect product of groups from $\mathcal{D}$, that is,
\[ G \leq \prod_n G_n, \]
where $G_n \in \mathcal{D}$ for $n \in \NN$, $\prod_n G_n$ is endowed with the product topology, and all the projections of $G$ on $G_n$ are surjective. Thus, we can talk about quasi-countable groups, quasi-divisible groups, quasi-dsc groups (where dsc stands for `direct sum of finite cyclic groups'), quasi-reduced groups (recall that an abelian group is reduced if it does not contain a non-trivial divisible group) or quasi-$p$-reduced groups (that is,  groups with an adequate family consisting of reduced $p$-groups.) Similarly, a Polish group $G$ is called pro-$\mathcal{D}$ if it is the inverse limit of an inverse system of groups from $\mathcal{D}$. Clearly, every Polish pro-$\mathcal{D}$ group is quasi-$\mathcal{D}$.

In \cite{Ma} (see also \cite{GaXu}), where quasi-countable groups are more thoroughly discussed, the following lemma has been proved.

\begin{lemma}
\label{leCharPro}
Let $G$ be a Polish (not necessarily abelian) group. The following conditions are equivalent.

\begin{enumerate}
\item $G$ is quasi-countable,
\item $G$ is pro-countable,
\item $G$ has a neighborhood basis at the identity consisting of open, normal subgroups,
\item $G$ has a neighborhood basis at the identity consisting of open subgroups, and there exists a compatible two-sided invariant metric on $G$. 
\end{enumerate}
\end{lemma}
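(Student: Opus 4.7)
The plan is to close the cycle (1)$\Rightarrow$(3)$\Rightarrow$(2)$\Rightarrow$(1) and then establish (3)$\Leftrightarrow$(4).

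The two easy directions are (1)$\Rightarrow$(3) and (2)$\Rightarrow$(1). For the former, if $G \leq \prod_n G_n$ is a subdirect product with the product topology, then the kernels of the projections $G \to \prod_{n \in F} G_n$, indexed by finite $F \subseteq \NN$, furnish the desired basis of open normal subgroups at the identity. For the latter, if $G = \varprojlim G_n$ then $G$ embeds as a closed subgroup of $\prod_n G_n$, and replacing each $G_n$ by the image of the $n$-th coordinate projection of $G$ exhibits $G$ as a subdirect product of countable discrete groups.

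The heart of the argument is (3)$\Rightarrow$(2). By separability one extracts from the given basis a countable decreasing chain $U_0 \supseteq U_1 \supseteq \ldots$ of open normal subgroups with $\bigcap_n U_n = \{e\}$; each quotient $G/U_n$ is then discrete and countable, since it is a countable union of cosets of the separable space $G$. The canonical map $\phi : G \to \varprojlim G/U_n$ is a continuous injective homomorphism, and automatically a topological embedding because preimages under $\phi$ of the standard basic opens of the inverse limit are precisely translates of the $U_n$. The delicate point is surjectivity: given a coherent thread $(g_n U_n)_n$, choose lifts $h_n \in g_n U_n$; coherence yields $h_{n+1} h_n^{-1} \in U_n$, and telescoping with the fact that each $U_n$ is a subgroup gives $h_m h_n^{-1} \in U_n$ for all $m \geq n$. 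Hence $(h_n)$ is Cauchy in any compatible translation-invariant metric on $G$ and converges in the Polish group $G$ to the desired preimage of the thread.

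For (3)$\Leftrightarrow$(4): assuming (3), with $U_0 = G$ and the chain as above, the ultrametric $d(g,h) := \inf\{2^{-n} : gh^{-1} \in U_n\}$ is compatible and, by normality of each $U_n$, two-sided invariant. For the converse, conjugation is an isometry of any two-sided invariant metric fixing $e$, so every $d$-ball $B_\epsilon(e)$ is conjugation-invariant. Given an open subgroup $U$ from the assumed basis with $B_\epsilon(e) \subseteq U$, the subgroup $V := \gen{B_\epsilon(e)}$ is open (since it contains $B_\epsilon(e)$), normal (as it is generated by a conjugation-invariant set), and contained in $U$ (since $U$ is itself a subgroup containing $B_\epsilon(e)$); letting $U$ range over the basis yields a basis of open normal subgroups. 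I expect the surjectivity step in (3)$\Rightarrow$(2) to be the main obstacle, as it is the sole point where Polishness, via completeness of a compatible metric, is used in an essential way rather than only the abstract topological-group structure.
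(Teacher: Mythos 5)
The paper itself offers no proof of this lemma to compare against: it is quoted verbatim from \cite{Ma}. So I can only judge your argument on its own terms. Most of it is correct: (1)$\Rightarrow$(3) via the kernels of finite projections, (2)$\Rightarrow$(1), the embedding half of (3)$\Rightarrow$(2), the two-sided invariant ultrametric $d(g,h)=\inf\{2^{-n}:gh^{-1}\in U_n\}$ for (3)$\Rightarrow$(4), and the argument for (4)$\Rightarrow$(3) (balls around $e$ are conjugation-invariant, so $\gen{B_\epsilon(e)}$ is an open normal subgroup inside the open subgroup $U$) are all sound.

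The gap is precisely at the surjectivity step you flagged as delicate, and it is a real one: you conclude that $(h_n)$ "is Cauchy in any compatible translation-invariant metric on $G$ and converges in the Polish group $G$." That inference rests on a false principle. Polishness guarantees that \emph{some} compatible metric is complete, but a compatible translation-invariant metric on a Polish group need not be complete, and the complete metric need not be invariant; since Cauchyness depends on the metric and not just the topology, being Cauchy for the invariant metric says nothing about the complete one. The canonical counterexample is $S_\infty$: it is Polish with a neighborhood basis of open (non-normal) subgroups and a compatible left-invariant metric, yet a sequence of permutations converging pointwise to a non-surjective injection is left-Cauchy with no limit in the group --- which is exactly the mechanism by which the map to an inverse limit can fail to be surjective. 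Your construction is saved by normality of the $U_n$: from $h_mh_n^{-1}\in U_n$ you also get $h_n^{-1}h_m=h_n^{-1}(h_mh_n^{-1})h_n\in U_n$, so $(h_n)$ is Cauchy in the two-sided (Raikov) uniformity, and every Polish group is Raikov-complete (for instance, if $d_L$ is a compatible left-invariant metric, then $d_L(x,y)+d_L(x^{-1},y^{-1})$ is a complete compatible metric). Equivalently, you may invoke the standard fact that a compatible \emph{two-sided} invariant metric on a Polish group --- such as the ultrametric you construct in (3)$\Rightarrow$(4) --- is automatically complete. One of these facts must be stated and used; as written, the key step of the whole proof is justified by an appeal that is false in general, even though the sequence you build does in fact converge.
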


Let  $G \leq \prod_n G_n$. For $g \in G$, by $g(n)$ we mean the value of $g$ on its $n$th coordinate, $\pi_n[G]$ denotes the projection of $G$ on $G_n$, $G_{\gen{n}}$ denotes the group
\[ G_{\gen{n}}=\{g \in G: g(k)=1 \mbox{ for } k \leq n \},\]
and $G_{\gen{-1}}=G$. Clearly, in the product topology, the family $\{G_{\gen{n}}\}$ forms a neighborhood basis at the identity for $G$.

If $\pi_n[G]=G_n$ for each $n$, we say that the family $\{G_n\}$ is \emph{adequate for} $G$. 

An element $g$ of a quasi-countable group $G$ is called pro-$p$, if $\clg{g}$ is a pro-$p$ group or, equivalently, there is a prime $p_0$ such that the order of $g(n)$ is a power of $p_0$ for every $n$. If $G$ is abelian, and $p_0$ is a fixed prime, the $p_0$-\emph{Sylow} subgroup of $G$ is defined as the group of all pro-$p_0$ elements in $G$. This agrees with the standard terminology used in the theory of pro-finite groups (see \cite{RiZa}.) Observe that every Sylow subgroup of $G$ is closed.  

The cyclic group of order $n$ is denoted by $\mathbbm{Z}(n)$. For a fixed prime $p$, the \emph{Pr\"{ufer} group} $\mathbbm{Z}(p^\infty)$ is the unique $p$-group in which the number of $p$th roots of every element is exactly $p$. 

Let $G$ be a $p$-group. The \emph{rank of} $G$, denoted by r$(G)$, is the cardinality of a maximal independent system in $G$ containing only elements of prime power orders (see \cite[p. 83]{Fu}). The \emph{final rank of} $G$, denoted by fin r$(G)$, is defined as
\[ \mbox{fin r}(G)=\inf \{ \mbox{r}(p^nG): n \in \NN \}. \]

In the sequel we will use the following results relating tameness, and relative tameness to algebraic properties of groups.

\begin{theorem}[Solecki]
\label{th:Sol2}
Let $G_n$, $n \in \NN$, be countable, discrete abelian groups. Then $\prod_n G_n$ is tame iff $G_n$ is torsion for all but finitely many $n$, and for all primes $p$ for all but finitely many $n$ the $p$-Sylow subgroup of $G_n$ is of the form $F \oplus \mathbbm{Z}(p^\infty)^k$, where $k \in \NN$, and $F$ is a finite abelian p-group.
\end{theorem}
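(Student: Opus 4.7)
The argument divides into necessity and sufficiency of the two stated conditions, and throughout I would use the principle from the start of Section 2 that Borelness of orbit equivalence relations transfers between Polish subgroups and the ambient group. Accordingly, non-tameness of $\prod_n G_n$ can be certified by producing a closed subgroup known to be non-tame, while tameness must be established by a direct descriptive-set-theoretic argument.

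For necessity I would work by contraposition. If $G_n$ is non-torsion for infinitely many $n$, then choosing elements of infinite order on those coordinates yields $\ZZ^\NN$ as a closed subgroup; $\ZZ^\NN$ is a classical example of a non-tame Polish abelian group, known to admit a continuous action whose orbit equivalence relation is analytic but not Borel. If condition (b) fails at some prime $p$, then by the structure theorem for abelian $p$-groups, on infinitely many $n$ either the reduced $p$-part of $G_n$ is infinite or its Pr\"ufer rank is infinite. In each case I would need to extract, as a closed subgroup of $\prod_n G_n$, a witness Polish abelian $p$-group that admits a non-Borel orbit equivalence relation. Pinning down the correct witness, and ruling out naive candidates such as products of finite $p$-groups (which tend to be compact and hence tame), is the delicate part here and will likely require a tailored construction of non-Borel orbit equivalence relations for specific divisible-with-reduced $p$-groups built out of the $G_n$.

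For sufficiency, assume both conditions hold. After absorbing the finitely many bad coordinates into a tame countable direct factor, I may assume every $G_n$ is torsion and each $p$-Sylow of $G_n$ is $F_{n,p} \oplus \mathbbm{Z}(p^\infty)^{k_{n,p}}$, with $F_{n,p}$ finite and $k_{n,p} \in \NN$. My plan is to exhibit a neighborhood basis at the identity of $\prod_n G_n$ consisting of open subgroups $V$ whose quotients $\prod_n G_n / V$ are direct sums of finitely many Pr\"ufer groups and a finite abelian group, and then to show that for any continuous action on a Polish space the orbit equivalence relation is a countable union of Borel relations coming from these $V$. The divisibility of the Pr\"ufer components and the finiteness of the $F_{n,p}$ are what should make this countable union Borel rather than merely analytic.

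The main obstacle is the final step of the sufficiency direction. Although $\prod_n G_n$ is non-archimedean and so has a clean neighborhood basis of open subgroups, the stabilizers under a continuous action can be complicated Polishable subgroups, and certifying Borelness of the orbit equivalence relation demands a fine analysis of how stabilizer cosets interact with the chosen basis — an analysis that must genuinely exploit the specific algebraic structure of the Pr\"ufer summands (in particular, that each $\mathbbm{Z}(p^\infty)$ is the increasing union of its finite characteristic subgroups). A secondary difficulty, as noted above, is pinning down the non-tame closed-subgroup witnesses for the necessity direction under the torsion assumption, where the non-tameness must emerge from a subtle combination of the divisible and reduced parts rather than from unboundedness of order alone.
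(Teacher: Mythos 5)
First, a point of reference: the paper does not prove this statement at all --- it is quoted from Solecki's paper \cite{Sol2} and used as a black box --- so the only meaningful comparison is with Solecki's own argument, and measured against that your proposal is a skeleton with both load-bearing parts missing, as you yourself concede. On the necessity side, the reduction is sound: if infinitely many $G_n$ are non-torsion one gets $\ZZ^\NN$ as a closed subgroup, and if the Sylow condition fails at $p$, then (since an infinite reduced countable $p$-group has an infinite basic subgroup) infinitely many $G_n$ contain either an infinite direct sum of cyclic $p$-groups or an infinite direct sum of Pr\"ufer groups, so after a pigeonhole argument $\prod_n G_n$ contains a closed subgroup $\prod_n C_n$ with all $C_n$ of a single one of these two kinds; the transfer principle from Section 2 then applies. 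But the entire content of the theorem lies in exactly what you defer: constructing, for $\ZZ^\NN$, for products of infinite dsc $p$-groups, and for products of infinite direct sums of Pr\"ufer groups, continuous actions whose orbit equivalence relations are analytic but not Borel. Citing the non-tameness of $\ZZ^\NN$ as ``classical'' is essentially circular, since it is one of the constructions of the very theorem being proved. Those constructions are the technical core of Solecki's paper, and they are absent from your proposal.

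The sufficiency half is in worse shape: it is not merely that the last step is hard, but that the proposed route starts from a false premise. Every open subgroup of $\prod_n G_n$ contains some $G_{\gen{m}}$, and $\prod_n G_n / G_{\gen{m}} \cong G_0 \oplus \dots \oplus G_m$; under the hypotheses this quotient is a countable torsion group, but it need not be a finite group plus finitely many Pr\"ufer groups, because infinitely many primes may occur --- for instance $G_n = \bigoplus_p \ZZ(p)$ satisfies the hypotheses of the theorem (every $p$-Sylow subgroup is the finite group $\ZZ(p)$), yet every nontrivial quotient by a basic open subgroup is infinite and not of your claimed form. So the neighborhood basis you intend to exhibit does not exist in general. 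Moreover, even granting nice quotients, the inference ``nice discrete quotients of the basic open subgroups imply Borel orbit equivalence relations'' is unjustified: $E_\alpha$ is a union of uncountably many graphs, no mechanism is offered for writing it as a countable union of Borel relations, and having a basis of open subgroups with countable discrete quotients is shared by non-tame groups (any non-tame quasi-countable group has such a basis), so any correct argument must use the specific algebraic hypotheses in a way your outline only gestures at. As it stands, neither direction of the theorem is proved.
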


This characterization implies that if, say, each $G_n$ is a finite direct sum of Pr\"{u}fer groups $\mathbbm{Z}(p^\infty)$, then $\prod_n G_n$ is tame, and thus all Polish subgroups of $\prod_n G_n$ are also tame.  On the other hand, if, for some fixed $p_0$, each $G_n$ is the infinite direct sum of $\mathbbm{Z}(p_0^\infty)$, then $\prod_n G_n$ is not tame, but this does not necessarily imply that all Polish, non-locally compact subgroups of $\prod_n G_n$ are not tame. 

\begin{theorem}[Ding, Gao] \label{th:DiGa}
Let $G_n$, $n \in \NN$, be countable, discrete abelian groups. Then $\prod_n G_n$ is relatively tame if and only if it is tame.
\end{theorem}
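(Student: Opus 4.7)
The easy direction is immediate. If $G = \prod_n G_n$ is tame and $\alpha, \beta$ are continuous actions of $G$ on Polish spaces $X, Y$, then $\alpha \times \beta$ is a continuous action of $G$ on the Polish space $X \times Y$, so $E_{\alpha \times \beta}$ is Borel by tameness; this does not even use the Borelness hypothesis on $E_\alpha$ and $E_\beta$. So the content of the theorem is the converse direction.

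For that, I would argue by contrapositive: assuming $G$ is not tame, produce continuous actions $\alpha, \beta$ on Polish spaces with $E_\alpha$ and $E_\beta$ Borel but $E_{\alpha \times \beta}$ non-Borel. By Theorem \ref{th:Sol2}, non-tameness of $G$ means either (a) infinitely many $G_n$ are non-torsion, or (b) there is a prime $p$ such that for infinitely many $n$ the $p$-Sylow of $G_n$ is not of the form $F \oplus \ZZ(p^\infty)^k$ with $F$ finite. The key observation is that one can partition $\NN = A \sqcup B$ into two infinite sets each of which still witnesses the failure of the Solecki criterion, yielding a natural decomposition $G \cong G_A \times G_B$ in which both $G_A = \prod_{n \in A} G_n$ and $G_B = \prod_{n \in B} G_n$ are themselves non-tame.

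The plan is then to let $\alpha$ factor through the projection $G \to G_A$ and $\beta$ through $G \to G_B$, and to design the factor actions so that each of $E_\alpha$ and $E_\beta$ is Borel while $E_{\alpha \times \beta}$ Borel-reduces a $\Sigma^1_1$-complete equivalence relation such as isomorphism of countable graphs induced by the logic action of $S_\infty$. In case (a), passing to direct summands isomorphic to $\ZZ$, one reduces to producing such a pair of shift-type actions of $\ZZ^\NN$ on sequence spaces split along $A \sqcup B$; in case (b), after restricting to a fixed prime $p$ and its Sylow subgroups, the failure of the structural condition yields in each relevant $G_n$ either elements of unboundedly large $p$-height or large independent systems of bounded $p$-height, from which analogous split constructions on Pr\"ufer or $p$-adic models can be built. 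The main obstacle is precisely this balancing act: each half of the construction must absorb only half of the Solecki obstruction, so that its orbit equivalence relation remains Borel despite the factor group $G_A$ or $G_B$ being non-tame, while the diagonal coupling recovers the full complexity. Verifying both properties simultaneously is the technical heart of the argument, and it is where one must combine Solecki's dichotomy with an explicit coding of a complete analytic relation that only comes into focus once the two actions are paired.
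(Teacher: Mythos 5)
Your easy direction is fine (tame trivially implies relatively tame), but your plan for the converse contains a fatal structural flaw. If $\alpha$ factors through the projection $G \to G_A$ and $\beta$ factors through $G \to G_B$, where $G = G_A \times G_B$, then the diagonal action can never induce a non-Borel relation once $E_\alpha$ and $E_\beta$ are Borel. Indeed, writing $g = (a,b)$ with $a \in G_A$, $b \in G_B$, the diagonal action is $g.(x,y) = (a.x,\, b.y)$, and since $a$ and $b$ range over $G_A$ and $G_B$ \emph{independently}, the orbit of $(x,y)$ is exactly the rectangle $[x]_{E_\alpha} \times [y]_{E_\beta}$. Hence
\[ (x,y) \; E_{\alpha \times \beta} \; (x',y') \ \Leftrightarrow \ x \, E_\alpha \, x' \ \mbox{ and } \ y \, E_\beta \, y', \]
so $E_{\alpha \times \beta}$ is just the product relation, which is Borel whenever both factors are. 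The entire difficulty of relative tameness lies in the coupling: a \emph{single} group element must move $x$ to $x'$ and $y$ to $y'$ simultaneously. Splitting the group into complementary direct factors and letting each action see only one factor destroys precisely this coupling, so no amount of cleverness in designing the factor actions (no ``balancing of half-obstructions'') can recover non-Borel complexity. The approach is unrepairable as stated, not merely incomplete.

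The splitting has to happen on the side of the \emph{space}, not on the side of a direct-product decomposition of the group. The canonical non-Borel actions in this setting are translation-type actions of the full group $G$ on products of coset spaces $\prod_n G/H_n$ (with $H_n$ closed subgroups), where $(x_n H_n) \sim (y_n H_n)$ iff $\bigcap_n \left( y_n x_n^{-1} H_n \right) \neq \emptyset$; non-emptiness of an intersection of countably many cosets is a $\Sigma^1_1$ condition that can be complete. The Ding--Gao argument, roughly, splits the \emph{family of subgroups} $\{H_n\}$ (equivalently, the coordinates of the target space) into two subfamilies, each of which yields a Borel orbit equivalence relation, while in the diagonal action the same $g$ must satisfy all coset conditions at once, and the joint relation is non-Borel. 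Crucially, both actions there are actions of the whole group $G$; neither factors through a proper direct factor. Note also that the paper you were given does not prove this theorem at all --- it is quoted from Ding and Gao --- so there is no internal proof to compare against; but on its own terms your outline fails at the step identified above.
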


\section{Main results}

\begin{lemma} \label{le:Rank}
Let $A$ be a countable, reduced $p$-group. Then either $A$ is a bounded dsc group or it has infinite final rank.
\end{lemma}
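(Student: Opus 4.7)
The plan is to argue by dichotomy. If $A$ is bounded, the classical Pr\"ufer--Kulikov theorem that every bounded abelian group is a direct sum of cyclic groups yields that $A$ is a bounded dsc. Otherwise $A$ is unbounded, and I will show $\ra{p^n A} = \infty$ for every $n$, which gives $\fr{A} = \infty$. Arguing by contradiction, suppose $\ra{p^n A}$ is finite for some $n$ and set $B = p^n A$; then $B$ is a countable reduced $p$-group whose socle $B[p] = \{b \in B : pb = 0\}$ has finite $\mathbb{F}_p$-dimension equal to $\ra{B}$. It suffices to prove the key claim that such a $B$ must be finite, for then $p^m B = 0$ for some $m$ and hence $p^{n+m} A = 0$, contradicting the unboundedness of $A$.

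For the key claim, my plan is to proceed by transfinite induction on the Ulm length $\tau$ of $B$, establishing along the way that $\tau$ is in fact finite. Consider the descending chain of socles $B^\alpha[p]$ inside the finite-dimensional space $B[p]$. If $\tau = \omega$, then $B^\omega[p] = \bigcap_n B^n[p] = 0$; but a descending chain in a finite-dimensional space with zero intersection stabilises at zero in finitely many steps, forcing $B^{n_0} = 0$ for some finite $n_0 < \omega$ and contradicting $\tau = \omega$. If $\tau > \omega$, then $B^\omega \ne 0$; by the inductive hypothesis applied to $B^\omega$ (smaller Ulm length, still finite socle), $B^\omega$ is finite, and the finiteness of $B^\omega$ then gives $(B/B^\omega)^n = B^n/B^\omega$ for every $n$ via the standard argument that $\bigcap_k(p^k B + B^\omega) = B^1$, exhibiting $B/B^\omega$ as a countable reduced $p$-group with finite socle and Ulm length exactly $\omega$, contradicting the $\tau = \omega$ case. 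Thus $\tau$ is finite, and one finishes in the finite case: $B^1$ is finite by the inductive hypothesis, while $B/B^1$ is separable (since $B^1$ finite forces $\bigcap_n(p^n B + B^1) = B^1$) with finite socle, so the uniform height bound on its finite socle makes $B/B^1$ bounded and hence finite, giving $B$ finite.

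The principal obstacle is this Ulm-theoretic finiteness of $\tau$; the interplay between $B^\omega$ and $B/B^\omega$ through the finiteness of $B^\omega$ is the delicate step. A more compact alternative is to appeal directly to the classical Ulm--Zippin classification: for countable re
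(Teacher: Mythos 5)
Your reduction of the lemma to a key claim is sound: the dichotomy (bounded versus unbounded), Pr\"ufer's theorem in the bounded case, and the observation that for a $p$-group $B$ the rank equals $\dim B[p]$, so that finite final rank produces a countable reduced $p$-group $B=p^nA$ with finite socle, are all correct, and the key claim (such a $B$ must be finite) is true. The problem is your proof of the key claim. The transfinite induction on the Ulm length $\tau$ breaks down at the step $\tau>\omega$: you apply the inductive hypothesis to $B^\omega$ on the grounds that it has ``smaller Ulm length'', but the length of $B^\omega$ is the ordinal $\sigma$ determined by $\omega+\sigma=\tau$, and for $\tau\geq\omega^2$ ordinal arithmetic gives $\sigma=\tau$ (for instance $\omega+\omega^2=\omega^2$), so $B^\omega$ has exactly the same length as $B$ and the inductive hypothesis says nothing about it. Choosing some other $B^\alpha$, $\alpha<\tau$, does not help: when $\tau$ is additively indecomposable (e.g.\ $\tau=\omega^2$) every nonzero $B^\alpha$ again has length $\tau$. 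Nor can you work with the quotient $B/B^\omega$ first, because your argument that $B/B^\omega$ has length exactly $\omega$ (the identity $\bigcap_n(p^nB+B^\omega)=B^\omega$) itself presupposes that $B^\omega$ is finite, which is precisely what the inductive hypothesis was supposed to deliver. So the induction never gets past $\omega^2$; this is a genuine gap, not a presentational one. (In addition, the proposal is cut off mid-sentence, so the fallback via the Ulm--Zippin classification is never actually carried out.)

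The claim can be proved with no transfinite machinery at all. Since $B[p]$ is finite, the descending chain $(p^nB)[p]$, $n\in\NN$, stabilizes at some $n_0$; then every element of order $p$ in $H=p^{n_0}B$ lies in $p^kH=p^{n_0+k}B$ for every $k$, i.e.\ has infinite height in $H$, and the classical lemma that a $p$-group all of whose order-$p$ elements have infinite height is divisible forces $H$ to be divisible, hence $H=0$ because $B$ is reduced; thus $B$ is bounded with finite socle, hence finite. This is essentially your $\tau=\omega$ argument, upgraded by the divisibility lemma so that no hypothesis on the length is needed. Note also that the paper dispatches the whole lemma by a softer route: if the final rank of $A$ is finite, then the rank of $p^nA$ is finite for some $n$, and Fuchs's Proposition 77.5 (every countably infinite reduced group is a direct sum of infinitely many non-trivial groups) immediately forces $p^nA$ to be finite, since an infinite direct sum of non-trivial groups has infinite rank; hence $A$ is bounded and Pr\"ufer's theorem finishes. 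Either repair is fine, but as submitted your induction does not close.
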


\begin{proof}
Suppose that the final rank of $A$ is finite. Then there exists $n \in \NN$ such that the rank of the subgroup $nA \leq A$ is finite. Since every countably infinite, reduced group can be decomposed into a direct sum of infinitely many non-trivial groups (see \cite[Proposition 77.5]{Fu}), $nA$ must be finite as well. But this means that there exists $n'>n$ such that $n'A$ is trivial, that is, $A$ is bounded. By \cite[Theorem 17.2]{Fu}, $A$ is a direct sum of cyclic groups. 
\end{proof}

Let us state two results from \cite{Ma} that are relevant in the present context.

\begin{lemma}[Lemma 14 in \cite{Ma}]
\label{leKul}
Let $G$ be a non-locally compact, quasi-torsion group. Then either there exists an open quasi-dsc subgroup $H \leq G$ or there exists a closed subgroup $L \leq G$ such that $G/L$ is quasi-divisible, and non-locally compact.
\end{lemma}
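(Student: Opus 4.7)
The plan is to exploit the divisible--reduced decomposition of torsion abelian groups on each coordinate of an adequate family, splitting into cases according to whether the factors are asymptotically dsc.

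\textbf{Setup.} Fix an adequate family $G \leq \prod_n G_n$ with each $G_n$ countable, discrete, and torsion. For each $n$, write $G_n = D_n \oplus R_n$ where $D_n$ is the maximal divisible subgroup (necessarily a direct sum of Pr\"{u}fer groups since $G_n$ is torsion) and $R_n$ is reduced, and further decompose $R_n = \bigoplus_p R_n^{(p)}$ into $p$-primary summands.

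\textbf{First case: $G_n$ is dsc for all but finitely many $n$.} For a sufficiently large $n_0$, the open subgroup $G_{\gen{n_0}}$ embeds into $\prod_{n > n_0} G_n$ with each factor dsc. Since countable subgroups of countable dsc torsion groups are dsc (by Pr\"{u}fer's first theorem applied $p$-primary-wise), the projections $\pi_n(G_{\gen{n_0}})$ are also dsc. Thus $G_{\gen{n_0}}$ is an open quasi-dsc subgroup of $G$, giving the first alternative.

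\textbf{Second case: $G_n$ fails to be dsc for infinitely many $n$.} For each such $n$, I construct a closed subgroup $L_n \leq G_n$ such that $G_n / L_n$ is non-trivial and divisible. If $D_n \neq 0$, set $L_n := R_n$, so that $G_n / L_n \cong D_n$ is divisible and non-trivial. Otherwise $D_n = 0$ and some $R_n^{(p)}$ is not dsc; by Lemma \ref{le:Rank} this component has infinite final rank, and the classical basic subgroup theorem yields a basic subgroup $B_n < R_n^{(p)}$ with $R_n^{(p)} / B_n$ non-trivial divisible; set $L_n := B_n \oplus \bigoplus_{q \neq p} R_n^{(q)}$. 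For the remaining (dsc) coordinates, put $L_n := G_n$. Let $L := G \cap \prod_n L_n$, which is closed in $G$ since each $L_n$ is clopen in the discrete $G_n$. Then $G/L$ embeds into $\prod_n G_n / L_n$ with surjective projections onto each discrete divisible factor, and infinitely many of these factors are non-trivial, so $G/L$ is quasi-divisible and Polish.

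\textbf{Main obstacle.} The critical point is to guarantee that $G/L$ is genuinely non-locally compact, and not merely sitting inside an ambient product with non-trivial discrete factors (a subgroup could still be locally compact or even discrete). The resolution uses the non-local-compactness of $G$ itself: for every $N$ the tail $G_{\gen{N}}$ is non-compact in $G$, which forces the projection $\pi_n(G_{\gen{N}})$ to be infinite for infinitely many $n$. The construction of the $L_n$ must be coordinated with these non-compactness witnesses --- possibly by passing to a subsequence on which the infinite projections of each $G_{\gen{N}}$ survive the quotient by $L_n$ --- so that the image of every $G_{\gen{N}}$ in $G/L$ has infinite projection onto infinitely many coordinates, ruling out any compact open subgroup. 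This bookkeeping, and the coherent coordinatewise selection of basic subgroups compatible with the global subdirect structure of $G$, is the technical heart of the argument.
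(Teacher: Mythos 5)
The statement you are proving is quoted in the paper from \cite{Ma} without proof, so the natural comparison is with the paper's own proof of the analogous Lemma \ref{le:Red}, which handles the quasi-$p$-reduced case by the same kind of argument your proposal attempts. The decisive difference is that Lemma \ref{le:Red} makes its case distinction, and chooses the subgroups to quotient by, at the level of the \emph{tail projections} $\pi_n[G_{\gen{m}}]$, whereas you work with the coordinate groups $G_n$ alone; this is a genuine error, not a stylistic one. Concretely, let $A$ be a countable reduced non-dsc $p$-group, let $B \subsetneq A$ be a basic subgroup (so $B$ is infinite dsc and $A/B$ is non-trivial divisible), and let
\[ G = \Bigl\{ (x_n) \in \prod_n A : x_n - x_0 \in B \ \mbox{for all } n \Bigr\}. \]
Then $G$ is a closed subdirect product of the torsion groups $G_n = A$, and $G_{\gen{m}} \cong \prod_{n>m} B$, so $G$ is quasi-torsion and non-locally compact. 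Every $G_n = A$ fails to be dsc, so $G$ falls into your second case, and your recipe (with the choice $B_n = B$) gives $L_n = B$ and $L = G \cap \prod_n B = \prod_n B$. But $L$ contains $G_{\gen{0}}$, hence $L$ is \emph{open} in $G$, and $G/L \cong A/B$ is discrete, i.e.\ locally compact: the second alternative of the lemma fails for this output. The correct conclusion for this $G$ is in fact the \emph{first} alternative --- $G_{\gen{0}} \cong \prod_{n \geq 1} B$ is an open quasi-dsc subgroup --- which your dichotomy cannot detect because it never looks at the tails. This is precisely the failure mode you flag in your ``Main obstacle'' paragraph, but flagging it is not fixing it; the sentence ``possibly by passing to a subsequence \dots is the technical heart of the argument'' defers exactly the content of the lemma, and in the example above no subsequence helps, since the image of every tail $G_{\gen{m}}$ in every coordinate $A/B$ is trivial.

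The repair requires restructuring the argument as in the paper's Lemma \ref{le:Red}: dichotomize on whether some tail $G_{\gen{m}}$ has all projections $\pi_n[G_{\gen{m}}]$ dsc (then that tail is the required open quasi-dsc subgroup, via Kulikov's theorem as in your first case), versus for every $m$ there being $n>m$ with $\pi_n[G_{\gen{m}}]$ non-dsc; in the latter case one takes a basic (or, via Lemma \ref{le:Rank} and \cite[Theorem 35.6]{Fu}, a suitable) subgroup of $\pi_n[G_{\gen{m}}]$ \emph{itself}, and then extends the resulting divisible quotient of $\pi_n[G_{\gen{m}}]$ to a quotient of the whole coordinate group $G_n$ using the absolute-direct-summand property of divisible groups, diagonalizing over $m$. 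That bookkeeping also addresses a second step you assert without justification: that $G/L$ ``embeds into $\prod_n G_n/L_n$'' as a closed subgroup. The algebraic injection is clear, but the quotient topology on $G/L$ can a priori be strictly finer than the subspace topology inherited from the product (equivalently, the image need not be closed), so quasi-divisibility of $G/L$ does not follow merely from the factors $G_n/L_n$ being divisible; the direct-summand arrangement in Lemma \ref{le:Red} is what gives control over this.
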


\begin{theorem}[Lemma 15 and Theorem 17 in \cite{Ma}]
\label{thProC}
Let $G$ be a non-locally compact, quasi-countable group. Then there exists a closed $L \leq G$, and infinite, discrete  groups $K_n$, $n \in \NN$, such that 
\[ \prod_n K_n \leq G / L. \]
Moreover, all $K_n$ are bounded (dsc) if $G$ is a quasi-bounded (quasi-dsc) group.
\end{theorem}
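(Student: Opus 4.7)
The plan is to use non-local compactness of $G$ to select a sequence $n_0 < n_1 < \ldots$ along which the quotients $H_i := G_{\gen{n_i}}/G_{\gen{n_{i+1}}}$ are infinite countable discrete groups, and then to embed a product $\prod_i K_i$ (with $K_i \leq H_i$ infinite) topologically into $G/L$ for a suitable closed $L$. The first step is straightforward: by quasi-countability, $G_{\gen{n}} \cong \varprojlim_m G_{\gen{n}}/G_{\gen{m}}$ is the inverse limit of the countable discrete groups $G_{\gen{n}}/G_{\gen{m}}$, so $G_{\gen{n}}$ is compact iff every such quotient is finite. Since $G$ is non-locally compact, every open subgroup $G_{\gen{n}}$ is non-compact, so for every $n$ there exists $m > n$ with $G_{\gen{n}}/G_{\gen{m}}$ infinite; iterating produces the desired sequence.

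For each $i$, select an infinite subgroup $K_i \leq H_i$ and a set-theoretic lift $\sigma_i : K_i \to G_{\gen{n_i}}$ of the quotient map $p_i : G_{\gen{n_i}} \to H_i$, and set $L := \{g \in G : g(n_i) = 0 \text{ for all } i\}$, the closed kernel of the continuous homomorphism $\pi : G \to \prod_i G_{n_i}$, $g \mapsto (g(n_i))_i$. Consider the candidate embedding
\[ \Phi: \prod_i K_i \to G/L, \qquad (k_i) \mapsto \left[ \sum_i \sigma_i(k_i) \right]; \]
this is well-defined because $\sigma_i(k_i) \in G_{\gen{n_i}}$ with $n_i \to \infty$ makes the series converge in $\prod_n G_n$, and $G$ is closed there. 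Continuity of $\Phi$ is then immediate.

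The main obstacle is arranging that $\Phi$ is a homomorphism and a topological embedding modulo $L$. This requires choosing $K_i$ and $\sigma_i$ compatibly so that (i) $\sigma_i$ is a group homomorphism on $K_i$, equivalently the extension $0 \to G_{\gen{n_{i+1}}} \to p_i^{-1}(K_i) \to K_i \to 0$ splits, and (ii) the $\sigma_i(K_i)$ are independent modulo $L$, meaning $\sum_i \sigma_i(k_i) \in L$ only when every $k_i = 0$. Both conditions can be handled by a recursive construction interleaving the choice of the $n_i$, a refinement of $K_i$ to a subgroup where the relevant extension splits (for abelian groups, passing to cyclic or divisible subgroups as dictated by the torsion structure of $H_i$), and a careful adjustment of the already-chosen $\sigma_j$ ($j \leq i$) by elements of $G_{\gen{n_{i+1}}}$ to cancel their contribution at coordinate $n_{i+1}$. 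With this compatibility, $\Phi$ becomes an injective continuous homomorphism, and matching the basic open neighborhoods $G_{\gen{n_{N+1}}} + L$ of $G/L$ with the standard neighborhoods of $\prod_i K_i$ yields the topological embedding. For the \emph{moreover} clause: if $G$ is quasi-bounded (resp. quasi-dsc), each $H_i$, being a subgroup of a finite product of bounded (resp. dsc) groups $G_n$, is itself bounded (resp. dsc by Kulikov's theorem), and the infinite subgroup $K_i$ inherits the required structure.
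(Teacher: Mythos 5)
Your proposal correctly handles the easy first step (non-local compactness yields infinitely many infinite quotients $G_{\gen{n_i}}/G_{\gen{n_{i+1}}}$) and the ``moreover'' clause, but the step you describe as the ``main obstacle'' and then wave through -- claim (i), that after refining $K_i$ the extension $0 \to G_{\gen{n_{i+1}}} \to p_i^{-1}(K_i) \to K_i \to 0$ splits -- is not just unproven; it is false, and no recursive adjustment of the $n_i$, $K_i$, $\sigma_j$ can repair it. Let $p$ be a prime, let $\ZZ_p$ denote the group of $p$-adic integers, and let $A^{(\NN)}$ denote the direct sum of countably many copies of $A$. Consider
\[ G = \left\{ (y_k)_{k} \in \ZZ_p^{\NN} : y_k \to 0 \ p\text{-adically} \right\} \cong \varprojlim_n \left( \ZZ(p^n) \right)^{(\NN)}, \]
embedded in $\prod_n (\ZZ(p^n))^{(\NN)}$ via $y \mapsto (y \bmod p^n)_n$. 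This is a Polish, quasi-dsc (hence quasi-countable), non-locally compact group, and it is torsion-free. In this presentation every quotient $H_i = G_{\gen{n_i}}/G_{\gen{n_{i+1}}}$ is a bounded $p$-group, so every subgroup $K_i \leq H_i$ is torsion; a homomorphic section $\sigma_i : K_i \to G_{\gen{n_i}}$ would send an element of order $d$ to an element of order dividing $d$ in the torsion-free group $G$, hence $\sigma_i = 0$. Your quotient by $L$ does not help: since the coordinates here are nested reductions, the kernel of $g \mapsto (g(n_i))_i$ is trivial for any subsequence, so $G/L \cong G$ is still torsion-free, while $\Phi$ restricted to each factor $K_i$ is a homomorphism from a torsion group and must vanish, contradicting injectivity. (There is also an internal mismatch: your $L$ remembers only the single coordinates $n_i$, while your $H_i$ live on the blocks between $n_i$ and $n_{i+1}$, so injectivity of $\Phi$ modulo $L$ is not arranged even setting the splitting issue aside.)

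The missing idea is that $L$ must cut into the coordinate groups themselves, $L = G \cap \prod_n L_n$ with $L_n \leq G_n$ nontrivial, chosen by abelian group structure theory so that the relevant images become \emph{direct summands} of the quotients $G_n/L_n$; the splitting is created by passing to quotients of the $G_n$, not by lifting subgroups of quotients back into $G$ (which the example shows is impossible in general). This is exactly the mechanism of Lemma \ref{le:Red} and Lemma \ref{le:Decom} in the paper (via Fuchs' theorem that a group of infinite final rank has a divisible quotient of infinite rank, and the fact that divisible groups are absolute direct summands), and of the cited proofs in \cite{Ma}, which also require a case analysis: either every neighborhood of the identity contains an element generating an infinite discrete group (giving $\ZZ^\NN \leq G$), or there is an open quasi-torsion subgroup, and then Lemma \ref{leKul} reduces to the quasi-dsc and quasi-divisible cases, where bounded pure subgroups, respectively divisible subgroups, are direct summands. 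For the example above, the correct choice is of this coordinate-cutting form: partition $\NN$ into infinite blocks $B_i$ and let $L = G \cap \prod_i \prod_{k \in B_i} p^{i+1}\ZZ_p$; then $G/L$ contains $\prod_i \ZZ(p)^{(B_i)}$, a product of infinite discrete dsc groups -- torsion that exists only after the quotient.
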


It turns out that in case $G$ is quasi-p-reduced, slightly more can be proved.

\begin{lemma} \label{le:Red}
Let $G$ be a non-locally compact, quasi-$p$-reduced group. Then either there exists an open quasi-bounded-dsc subgroup $H \leq G$ or there exists a closed subgroup $L \leq G$, and discrete, divisible groups $K_n$, $n \in \NN$, with infinite rank, and such that 
\[ \prod_n K_n \leq G/L. \]
\end{lemma}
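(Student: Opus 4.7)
The plan is to apply Lemma~\ref{leKul} to $G$ (which is quasi-torsion, being quasi-$p$-reduced) and then refine each branch of the resulting dichotomy using Lemma~\ref{le:Rank} and Theorem~\ref{thProC}.

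In the first case of Lemma~\ref{leKul} there is an open quasi-dsc subgroup $H \leq G$; its adequate family $\{A_n\}$ consists of countable dsc $p$-groups, which are in particular reduced, so Lemma~\ref{le:Rank} forces each $A_n$ to be bounded dsc or to have infinite final rank. If cofinitely many $A_n$ are bounded dsc, then a suitable basic open subgroup of $H$ is an open quasi-bounded-dsc subgroup of $G$, giving the first alternative of the conclusion. Otherwise, after passing to a subsequence we may assume every $A_n$ has infinite final rank.

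In this remaining subcase the plan is to manufacture a quasi-divisible non-locally compact quotient of $H$, thereby reducing it to the second case of Lemma~\ref{leKul}. For each $n$, since $A_n$ is an unbounded dsc $p$-group, its cyclic summands can be partitioned into infinitely many infinite blocks; inside each block $j$, order-increasing generators $e_k$ (of orders $p^{m_k}$) together with the telescoping relators $e_k - p^{m_{k+1}-m_k} e_{k+1}$ define a subgroup $L_n^{(j)} \leq A_n$ whose quotient is isomorphic to $\mathbb{Z}(p^\infty)$. Setting $L_n = \sum_j L_n^{(j)}$, the quotient $A_n/L_n$ becomes a countable direct sum of Pr\"ufer groups, i.e., a divisible $p$-group of infinite rank. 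Then $L_1 := H \cap \prod_n L_n$ is closed in $G$, and $H/L_1$ embeds into $\prod_n (A_n/L_n)$ with surjective coordinate projections, so $H/L_1$ is quasi-divisible and non-locally compact. Applying Theorem~\ref{thProC} to $H/L_1$, or directly to $G/L_0$ in the second case of Lemma~\ref{leKul}, yields infinite discrete $K_n$ and a further closed subgroup containing $\prod_n K_n$; pulling back to $G$ supplies the closed $L \leq G$ required by the conclusion.

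The main obstacle is this final step: $H/L_1$ (and $G/L_0$) is only a subdirect product of divisibles, so the factors $K_n$ supplied by Theorem~\ref{thProC} must additionally be argued to be divisible and of infinite rank rather than merely infinite discrete. Divisibility should follow by inspecting the construction in Theorem~\ref{thProC} in the quasi-divisible setting, in direct parallel with the quasi-bounded and quasi-dsc refinements already stated there; infinite rank will require grouping or subsequencing the adequate family of divisible $p$-groups so that each block contains infinitely many independent Pr\"ufer summands, combined with the standard open-subgroup and Baire-category arguments used throughout the theory of quasi-countable groups to promote subdirect embeddings to honest topological products.
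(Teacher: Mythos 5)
Your route (reduce to Lemma \ref{leKul}, then upgrade Theorem \ref{thProC}) is genuinely different from the paper's, but it has a gap that occurs \emph{before} the step you flag as the ``main obstacle''. Your kernels $L_n$ are chosen from the internal structure of the discrete factors $A_n$ alone, with no reference to how $H$ sits inside $\prod_n A_n$; as a result, the claim that $H/L_1$ is non-locally compact can simply fail. Concretely, let $C=\bigoplus_k \ZZ(p^k)$, let $L_* \leq C$ be your telescoping kernel (so $C/L_*$ is an infinite sum of Pr\"ufer groups), and set
\[ H=\{h \in \prod_n C : \ h(n)-h(0) \in L_* \mbox{ for all } n\}. \]
Then $H$ is a closed subdirect product of copies of $C$ (topologically isomorphic to $C \times \prod_{n \geq 1} L_*$), it is quasi-$p$-reduced and non-locally compact, and every factor $A_n=C$ is unbounded, so you are in your ``remaining subcase''. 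But $H \cap \prod_n L_*$ contains $H_{\gen{0}}$, hence is open in $H$, and $H/L_1 \cong C/L_*$ is countable and discrete: Theorem \ref{thProC} cannot be applied to it, and this quotient contains no infinite product of infinite groups. The paper avoids exactly this trap by running the rank dichotomy and the divisible-quotient construction on the projections $\pi_n[G_{\gen{m}}]$ of arbitrarily small basic open subgroups (in the example above, on $L_*$ itself, which does have infinite final rank), not on the ambient factors; its dichotomy is: either some $G_{\gen{m}}$ has all its projections bounded dsc, or for every $m$ there is $n>m$ with $\pi_n[G_{\gen{m}}]$ of infinite final rank.

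The second gap is the one you do acknowledge, and it is not a routine patch. Theorem \ref{thProC} produces only infinite discrete $K_n$, with refinements stated for the quasi-bounded and quasi-dsc cases; those refinements are plausible because boundedness and being dsc pass to subgroups and quotients, whereas divisibility does not pass to subgroups, and the projections of basic open subgroups of a quasi-divisible group need not be divisible. So ``inspecting the construction'' of Theorem \ref{thProC} does not obviously yield divisible $K_n$, let alone of infinite rank --- and this missing upgrade is essentially the content of Lemma \ref{le:Red} itself, so deferring to it is close to circular. The paper instead gets both properties in one stroke: by Fuchs' Theorem 35.6, the reduced $p$-group $\pi_n[G_{\gen{m}}]$ of infinite final rank has a quotient $K'_n$ that is divisible of infinite rank; since divisible groups are absolute direct summands, $K'_n$ splits off $G_n/L'_n$ as a direct summand, and then $L=(\prod_n L_n)\cap G$, with $L_n$ absorbing the complementary summands, makes $\prod_n K_n$ sit inside $G/L$. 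I would rebuild your proof along those lines rather than through Lemma \ref{leKul} and Theorem \ref{thProC}.
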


\begin{proof}
Let $\{G_n\}$ be an adequate family for $G$ consisting of reduced $p$-groups. By Lemma \ref{le:Rank}, either there exists $m$ such that $\pi_n[G_{\gen{m}}]$ is a bounded dsc group for every $n>m$ or for every $m$ there exists $n>m$ such that $\pi_n[G_{\gen{m}}]$ has infinite final rank. If the former holds, $G_{\gen{m}}$ is quasi-bounded-dsc. Otherwise, by \cite[Theorem 35.6]{Fu}, for every $m$ there exists $n>m$ and $L'_n \leq G_n$ such that $K'_n=\pi_n[G_{\gen{m}}]/L'_n$ is a divisible group with infinite rank. Since divisible groups are absolute direct summands, for every $m$ we can find $f(m) \in \NN$, and $L''_{f(m)} \leq G_{f(m)}$ such that 
\[ G_{f(m)}/L'_{f(m)}=K'_{f(m)} \oplus (L''_{f(m)}/L'_{f(m)}). \]
Put $L_n=\langle L'_n, L''_n \rangle$ if $n$ is of the form $n=f^m(0)$ for some $m$, and $L_n=G_n$ otherwise. It is straightforward to verify that $K_n=K'_{f^n(0)}$, $L=(\prod_n L_n) \cap G$ are  as required.
\end{proof}
 
\begin{corollary} \label{co:Red}
Let $G$ be a non-locally compact, quasi-$p$-reduced group. Then there exists a closed subgroup $L \leq G$, and infinite, discrete groups $K_n$, $n \in \NN$, all of whom are either bounded dsc groups or divisible groups with infinite rank, and that are such that 
\[ \prod_n K_n \leq G/L. \]
\end{corollary}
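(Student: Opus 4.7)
The plan is to combine Lemma \ref{le:Red} with Theorem \ref{thProC}, handling the two alternatives produced by Lemma \ref{le:Red} separately. Apply Lemma \ref{le:Red} to $G$: either (a) $G$ contains an open quasi-bounded-dsc subgroup $H$, or (b) there already exist a closed $L \leq G$ and discrete divisible groups $K_n$ of infinite rank with $\prod_n K_n \leq G/L$. In case (b) we are done, since divisible groups of infinite rank are among the two allowed types.

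In case (a), the key observation is that $H$ is itself non-locally compact. Indeed, an open subgroup of a topological group is automatically closed (its complement is a union of open cosets), so $H$ is clopen in $G$; if $H$ were locally compact, $G$ would be a topological disjoint union of cosets each homeomorphic to $H$, hence locally compact, contradicting the hypothesis. Thus $H$ is a non-locally compact, quasi-bounded-dsc group, and Theorem \ref{thProC} applies to $H$, yielding a closed subgroup $L \leq H$ and infinite, discrete, bounded dsc groups $K_n$ with
\[ \prod_n K_n \leq H/L. \]

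It remains to transfer this embedding from $H/L$ to $G/L$. Since $H$ is clopen in $G$, the closed subgroup $L$ of $H$ is also closed in $G$. The inclusion $H \hookrightarrow G$ then induces a continuous injective homomorphism $H/L \hookrightarrow G/L$ (its image is the clopen subgroup $H/L$), so we obtain
\[ \prod_n K_n \leq H/L \leq G/L, \]
as required.

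There is no real obstacle; the only subtlety is justifying the passage from $H$ to $G$ in case (a), which uses the standard fact that open subgroups of topological groups are clopen together with the observation that local compactness passes up from an open subgroup to the whole group.
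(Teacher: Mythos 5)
Your proposal is correct and follows essentially the same route as the paper, which likewise splits into the two alternatives of Lemma \ref{le:Red} and invokes Theorem \ref{thProC} in the quasi-bounded-dsc case. The paper's proof is only two sentences long, so the details you supply (that the open subgroup $H$ inherits non-local compactness, and that the embedding $\prod_n K_n \leq H/L$ transfers to $G/L$ because $H$ is clopen in $G$) are exactly the steps it leaves implicit.
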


\begin{proof}
If $G$ contains an open quasi-bounded-dsc subgroup, we use Theorem \ref{thProC}. Otherwise, we apply Lemma \ref{le:Red}.
\end{proof}

\begin{theorem} \label{th:Main:Red}
Let $G$ be a non-locally compact, quasi-reduced group. Then exactly one of the following holds.
\begin{enumerate}
\item in every neighborhood of the identity in $G$ there exists an element generating an infinite, discrete group; in this case $G$ is not tame nor relatively tame,
\item there exists an open quasi-torsion $H \leq G$, and a non-locally compact Sylow subgroup of $H$; in this case, $G$ is not tame nor relatively tame,
\item there exists an open quasi-torsion $H \leq G$ such that every Sylow subgroup of $H$ is locally compact; in this case, $G$ is tame.
\end{enumerate}
\end{theorem}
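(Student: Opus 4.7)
The plan is to first establish that the three cases are exhaustive and mutually exclusive, and then prove each implication separately. The key technical observation underlying the trichotomy is that for $g \in G$, the closure $\clg{g}$ is infinite and discrete if and only if some coordinate $g(m)$ has infinite order in $G_m$. Indeed, $\gen{g} \cap G_{\gen{n}} = \{0\}$ exactly when the $\mathrm{lcm}$ of the orders of $g(0),\dots,g(n)$ is infinite, i.e.\ some $g(m)$ with $m \leq n$ has infinite order; in this case $\gen{g}$ is closed and discrete. Given this, if (1) fails we can find $n_0$ such that every $g \in G_{\gen{n_0}}$ has all coordinates torsion, so $\{\pi_m[G_{\gen{n_0}}]\}_{m > n_0}$ is an adequate family of torsion groups and $H := G_{\gen{n_0}}$ is an open quasi-torsion subgroup, putting us in case (2) or (3) according to whether $H$ has a non-locally compact Sylow. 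Cases (2) and (3) are clearly exclusive, and they both exclude (1) because in any quasi-torsion subgroup no element has a coordinate of infinite order.

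For case (1), I would inductively pick $g_k \in G_{\gen{n_{k-1}}}$ and $m_k > n_{k-1}$ with $g_k(m_k)$ of infinite order, then set $n_k = m_k$. Letting $L = \{x \in G : x(m_k)=0 \text{ for all } k\}$, the quotient $G/L$ embeds as a closed subgroup of $\prod_k G_{m_k}$, and the images of the $g_k$ generate a closed copy of $\prod_k \ZZ$ (the distinguished coordinates $m_k$ are disjoint and each $g_k(m_k)$ has infinite order). Since $\ZZ$ is non-torsion, Theorem \ref{th:Sol2} (resp.\ Theorem \ref{th:DiGa}) shows $\prod_k \ZZ$ is not tame (resp.\ not relatively tame); this passes to $G/L$ and then to $G$ via the lifting result of \cite[Theorem 3.5.2]{Gao} cited in Section 2.

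For case (2), a non-locally compact Sylow subgroup $H_p$ of $H$ is closed in $G$ and inherits an adequate family of reduced $p$-groups (as $\pi_n[H_p] \leq (G_n)_p \leq G_n$), so it is non-locally compact quasi-$p$-reduced. Corollary \ref{co:Red} then yields $\prod_n K_n \leq H_p/L$ with each $K_n$ either an infinite bounded dsc $p$-group or a divisible $p$-group of infinite rank. In either alternative $K_n$ is its own $p$-Sylow and fails to be of the form $F \oplus \mathbbm{Z}(p^\infty)^k$ with $F$ finite; hence by Theorem \ref{th:Sol2} the product is not tame, by Theorem \ref{th:DiGa} not relatively tame, and this propagates to $G$ as in case (1).

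The main obstacle is case (3). Since $H$ is open in $G$, the coset space $G/H$ is countable, so the orbits of any continuous $G$-action decompose as countable unions of $H$-orbits; consequently it suffices to show $H$ is tame. The strategy is to choose an adequate family $\{H_n\}$ of torsion groups for $H$ so that the local compactness of each Sylow $H_p$ of $H$ translates into the algebraic condition of Solecki's criterion: for every prime $p$, the $p$-primary component of $H_n$ is of the form $F \oplus \mathbbm{Z}(p^\infty)^k$ for all but finitely many $n$. Once such a family is produced, Theorem \ref{th:Sol2} gives $\prod_n H_n$ tame, and then $H \leq \prod_n H_n$ is tame via the same subgroup-lifting result. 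The delicate point is this transfer from topology to algebra; a natural route is to argue contrapositively, that a ``bad'' $p$-primary component occurring infinitely often would, by an analogue of Corollary \ref{co:Red}, embed some $\prod_k K_k$ into a quotient $H_p/L'$ with forbidden $K_k$, forcing $H_p$ to be non-locally compact against the hypothesis.
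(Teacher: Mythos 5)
The serious problem is in case (3), where your strategy departs from anything that can be carried out directly. You reduce to showing that $H$ is tame (that reduction is fine), and then propose to find an adequate family $\{H_n\}$ \emph{for $H$ itself} satisfying Solecki's criterion; you correctly flag the existence of such a family as the delicate point, but the justification you sketch for it is invalid. Your contrapositive — a bad $p$-primary component occurring infinitely often in an adequate family would force $S_p$ to be non-locally compact — is false. Take $H=\prod_k \ZZ(p_k)^{(\omega)}$ for distinct primes $p_k$: every Sylow subgroup $B_k=\ZZ(p_k)^{(\omega)}$ is discrete, hence locally compact, and $H$ is tame by Theorem \ref{th:Sol2}; yet the adequate family $H_n=B_0\oplus\dots\oplus B_n$ (projections onto initial blocks, under which $H$ is the inverse limit) has $p_0$-primary component $\ZZ(p_0)^{(\omega)}$, which is bounded and infinite, hence forbidden, for \emph{every} $n$. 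So the existence of one bad family proves nothing about the Sylows; what you need is that the non-existence of \emph{any} good family forces a non-locally compact Sylow, which is precisely the unproved existence statement in disguise, and Corollary \ref{co:Red} (which produces bad factors in a quotient of $S_p$, not constraints on adequate families of $H$) does not yield it. The paper circumvents this entirely by a different mechanism: it assembles a compact subgroup $K\leq G$ from compact open subgroups $G_{\gen{i_k}}\cap S_{p_k}$ of the Sylows, shows that $G/K$ — whose Sylows are then discrete — embeds into a tame product (this uses \cite[Lemma 5]{Ma} to trivialize the Sylow projections), and then transfers tameness from $G/K$ back to $G$ by letting $G/K$ act on the Borel set of $K$-orbits in the hyperspace $\mathcal{K}(X)$. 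That last transfer (tameness lifts along quotients by \emph{compact} subgroups) is the central idea of the proof and has no counterpart in your proposal; in particular, any good family for $H$ itself would have to contain factors separating the points of such a compact core $K$, and you give no reason why those factors can be chosen with good $p$-primary parts.

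There is also a flaw in case (1), though a repairable one: the claim that ``$G/L$ embeds as a closed subgroup of $\prod_k G_{m_k}$'' is false in general, since coordinate projections of closed subgroups need not be closed and the quotient topology need not agree with the subspace topology. For instance, inside $\ZZ\times\prod_n \ZZ(p^n)$ the group $\{(a,\, a\bmod p,\, a\bmod p^2,\dots): a\in\ZZ\}$ is closed and discrete, but its projection away from the first coordinate is a countable dense, hence non-closed, subgroup of $\varprojlim_n \ZZ(p^n)$. Luckily your triangular system already does the job inside $G$: for $(a_j)\in\ZZ^\NN$ the sum $\sum_j a_j g_j$ converges (since $g_j\in G_{\gen{n_{j-1}}}$), and its $m_k$-th coordinate equals $a_k g_k(m_k)+\sum_{j<k}a_j g_j(m_k)$, so the $a_k$ are recovered continuously and inductively from finitely many coordinates; this gives a closed topological copy of $\ZZ^\NN$ in $G$, which is exactly what the paper invokes from \cite[Corollary 12]{Ma}, and non-tameness then lifts from the closed subgroup as you say. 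Your trichotomy argument and your case (2) are correct and essentially identical to the paper's.
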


\begin{proof}
Suppose that the assumption of Point (1) holds. An inspection of the proof of  \cite[Corollary 12]{Ma} shows that $\ZZ^\NN \leq G$. Thus, Point (1) follows from Theorems \ref{th:Sol2} and \ref{th:DiGa}, and the fact that every continuous action $\beta$ of $\ZZ^\NN$ extends to a continuous action $\alpha$ of $G$ such that $E_\alpha$ is not Borel if $E_\beta$ is not Borel.

Suppose now that the assumption of Point (1) does not hold, that is, there exists an open quasi-torsion subgroup in $G$. Without loss of generality, we can assume that $G$ is quasi-torsion. Let $\{G_n\}$ be an adequate family for $G$, and let $\{S_p\}$ be the family of all Sylow subgroups of $G$. If $S_p$ is non-locally compact for some prime $p$, we apply Corollary \ref{co:Red}, and Theorem \ref{th:Sol2}.

Otherwise, every $p$-Sylow subgroup $S_p$ of $G$ is locally compact. Let $\{p_k\}$ be an enumeration of all prime numbers. We can easily construct a strictly increasing sequence $\{i_k\}$ such that each $K_k=G_{\gen{i_k}} \cap S_{p_k}$ is compact. By \cite[Lemma 8]{Ma}, $K=\prod_k K_k$ is a compact subgroup of $G$.

Let $\{G'_n\}$ be an adequate family for $G/K$. As every $p$-Sylow subgroup $S'_p$ of $G/K$ is discrete, for every $p$ we can find $n_p$ such that, for all $g,h \in S'_p$, $g(n)=h(n)$ for $n<n_p$ implies that $g=h$. In particular, without loss of generality we can assume that in $G/K$ we have that $\pi_n[S'_p]$ is trivial for $n \geq n_p$ (see \cite[Lemma 5]{Ma} for details.) Thus, by Theorem \ref{th:Sol2}, both $\prod_n G'_n$ and $G/K$ are tame.

Suppose now that $\alpha$ is a continuous action of $G$ on a Polish space $X$. Let $O$ be the set of all the orbits $[x]_K$, $x \in X$, of the restriction of $\alpha$ to $K$. It is not hard to see that $O$ is a Borel subset of the hyperspace $\mathcal{K}(X)$ of all compact subsets of $X$ with the Vietoris topology. Moreover, $\alpha$ naturally gives rise to a continuous action $\beta$ of $G/K$ on $O$. To see this, fix $g_0,g_1 \in G$ such that $g_0K=g_1K$, that is, $g_0=g_1 h$ for some $h \in K$. Then
\[ g_0.x=g_1.(h.x)=h.(g_1.x), \]
for $x \in X$, that is, 
\[ g_0.[x]_K=g_1.[x]_K=[g_1.x]_K \]
for $x \in X$. Also, $E_\beta$ is Borel because $G/K$ is tame, and $\beta$ can be regarded as a continuous action on a Polish space (see \cite[Theorem 5.2.1]{Be}.) Checking that
\[ x E_\alpha y \ \Leftrightarrow \ [x]_K E_\beta [y]_K \]
for $x, y \in X$ is straightforward. As the mapping $x \mapsto [x]_K$ is continuous, and $\alpha$ was arbitrary, we get that $G$ is tame.
\end{proof}

Now we have the following decomposition result for (abelian) quasi-countable groups.

\begin{lemma}
\label{le:Decom}
Let $G$ be a quasi-countable group. Then $G=R \oplus \prod_n D_n$, where $R$ is quasi-reduced, and each $D_n$ is divisible.
\end{lemma}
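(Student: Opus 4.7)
The plan is to extract a divisible direct summand of $G$ by exploiting its pro-countable presentation, and to identify this summand with a product $\prod_n D_n$ of countable discrete divisible groups.

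By Lemma \ref{leCharPro}, I would write $G$ as an inverse limit $G = \varprojlim G^{(k)}$ of countable discrete abelian groups with surjective bonding maps $\psi_k : G^{(k+1)} \to G^{(k)}$, and decompose each $G^{(k)} = D^{(k)} \oplus R^{(k)}$ algebraically into its maximal divisible part and a reduced complement. The key technical step is to coherently adjust these decompositions so that the bonding maps respect them, possibly after passing to a cofinal subsystem. Using the injective property of divisible abelian groups in the category of abelian groups, and modifying the reduced complements at each stage by suitable subgroups of $\ker \psi_k$, one arranges that $\psi_k(R^{(k+1)}) \subseteq R^{(k)}$ and that $\psi_k|_{D^{(k+1)}}$ is a split surjection onto $D^{(k)}$. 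Passing to the inverse limit yields a topological direct sum $G = D \oplus R$, where $D = \varprojlim D^{(k)}$ is closed and divisible, and $R = \varprojlim R^{(k)}$ is quasi-reduced as a subdirect product of reduced countable discrete groups.

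The product structure on $D$ then follows: since the bonding maps between the divisible parts were arranged to split, $D$ is topologically isomorphic to the product of the successive kernels, that is, $D \cong \prod_n D_n$ with each $D_n$ a discrete divisible group. Combined with the quasi-reduced $R$, this yields the claimed decomposition. The main obstacle is the coherent adjustment of the decompositions across the bonding maps. Individually each splitting $G^{(k)} = D^{(k)} \oplus R^{(k)}$ is standard, but compatibility under $\psi_k$ is not automatic: for example, $\psi_k(D^{(k+1)}) = D^{(k)}$ can fail (as for the surjection $\bigoplus_n \ZZ \twoheadrightarrow \mathbb{Q}$), so one needs both to use the injectivity of divisible groups to correct the chosen complements and to pass to a cofinal subsystem in which the divisible-to-divisible bonding maps become split surjections.
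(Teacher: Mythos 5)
Your plan is essentially the paper's own strategy transposed into inverse-limit language: split each level into its maximal divisible subgroup plus a reduced complement, force the splittings to be compatible with the structure maps, and then recognize the divisible part as a product of discrete groups. Everything therefore rests on the step you yourself flag as the main obstacle, and that step is not merely unproven in your sketch --- it cannot be carried out. The subgroups $D^{(k)}$ are canonical, so re-choosing reduced complements never alters the maps $\psi_k|_{D^{(k+1)}}$, and passing to a cofinal subsystem only replaces these maps by their compositions. Consider the surjective inverse system
\[ \ZZ(p^\infty) \xleftarrow{\;\times p\;} \ZZ(p^\infty) \xleftarrow{\;\times p\;} \ZZ(p^\infty) \xleftarrow{\;\times p\;} \cdots \]
Every bonding map, and every composition $\times p^{j}$, is a surjection between divisible groups that does not split: a right inverse would be an injective endomorphism of $\ZZ(p^\infty)$ whose image is a non-trivial divisible subgroup, hence an automorphism, forcing $\times p^{j}$ to be injective, which it is not. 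So no cofinal subsystem has split divisible-to-divisible bonding maps. (The surjectivity half of your requirement fails just as irreparably: in systems $G^{(k)}=\ZZ(p^\infty)\oplus\bigoplus_{n>k}\ZZ(p^n)$ whose bonding maps annihilate $\ZZ(p^\infty)$ and carry the dsc summand onto everything --- possible since $\bigoplus_{n>k+1}\ZZ(p^n)$ surjects onto every countable abelian $p$-group --- the limit is actually quasi-reduced, so the lemma holds, yet every $D^{(k)}$ is an artifact of the presentation which your scheme, keeping the level groups fixed, can never eliminate.)

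In fact the obstacle you identified is fatal to the statement itself, not only to your method. The limit $L$ of the displayed system is the additive group $\mathbb{Q}_p$ of $p$-adic numbers: it is Polish and pro-countable by construction; it is divisible, since multiplication by integers prime to $p$ is bijective levelwise and the coordinate shift $(x_k)_k\mapsto(x_{k+1})_k$ is a right inverse of multiplication by $p$; and it is locally compact but not discrete, because $L_{\gen{0}}=\varprojlim\,(\ZZ(p^\infty)[p^k],\times p)\cong\ZZ_p$ is an infinite compact open subgroup. Suppose $L=R\oplus\prod_n D_n$ as in the lemma, with the $D_n$ discrete (which is what the paper's proof constructs and what Theorem \ref{th:Main} uses). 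A quasi-reduced group is reduced, since a divisible subgroup has divisible, hence trivial, projections onto the reduced coordinate groups; on the other hand $R\cong L/\prod_n D_n$ is divisible, being a quotient of a divisible group; so $R=0$ and $L\cong\prod_n D_n$ as topological groups. But a product of discrete divisible groups is either discrete (almost all factors trivial) or not locally compact (non-trivial divisible groups are infinite), and $L$ is neither --- a contradiction. So the lemma, read as it is proved and used, is false, and no refinement of your plan can establish it. Nor should you take the paper's own argument as the guide to filling your gap: it makes the two corresponding unjustified leaps --- that $G=(G\cap\prod_n R_n)\oplus(G\cap\prod_n D'_n)$, which fails for the parenthetical example above, and that $\pi_n[D'_{\gen{m}}]$ is divisible, which fails for $L$, where $\pi_n[L_{\gen{m}}]\cong\ZZ(p^{n-m})$. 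Your instinct about where the difficulty lies was sound; any repair must either exclude groups like $\mathbb{Q}_p$ or allow non-discrete factors in the decomposition.
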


\begin{proof}
Let $\{G_n\}$ be an adequate family for $G$. As divisible groups are absolute direct summands (see \cite[Theorem 21.2]{Fu}), we can write $G_n=R_n \oplus D'_n$, $n \in \NN$, where $R_n$ is reduced, $D'_n$ is divisible, and, moreover, the maximal divisible subgroup of $\pi_{n}[G_{\gen{m}}]$ is a subgroup of $D'_{n}$ for each $m$ and $n>m$. In other words, $G \leq \prod_n R_n \oplus \prod_n D'_n$, so, for
\[ R=G \cap \prod_n R_n, \, D'=G \cap \prod_n D'_n, \]
we get that $G=R \oplus D'$, where $R$ is quasi-reduced, $D'$ is quasi-divisible, and $\{D'_n\}$ is an adequate family for $D'$ such that $\pi_{n}[D'_{\gen{m}}]$ is divisible for every $m$ and $n>m$.

We construct groups $D_n$ such that $D'$ is isomorphic to $\prod_n D_n$ by induction. Put $D_0=D'_0$, and suppose that the projection of $D'$ on $D'_0 \oplus \ldots \oplus D'_n$ is surjective for some $n>0$. As divisible groups are absolute summands, we can write
\[ D'_{n+1}=E \oplus F, \]
where $F=\pi_{n+1}[D'_{\gen{n}}]$. Fix $g,h \in D' \setminus D'_{\gen{n}}$. Clearly, if $g(i)=h(i)$ for $i \leq n$, that is, $g-h \in D'_{\gen{n}}$, the restrictions of $g(n+1)$, $h(n+1)$ to $E$ are equal. Thus, we can assume that $E=\{0\}$, that is, that the projection of $D'$ on $D'_0 \oplus \ldots \oplus D'_{n+1}$ is also surjective. Put $D_{n+1}=F$.
\end{proof}

Thus, Theorem \ref{th:Main:Red} characterizes tameness and relative tameness for $R$, while for $\prod_n D_n$ it is done in Theorem \ref{th:Sol2}. We are in a position to prove our main result that completely characterizes tameness and relative tameness for all (abelian) quasi-countable groups.

\begin{theorem}
\label{th:Main}
Suppose that $G$ is a quasi-countable group. Let $R$ be a quasi-reduced group and let $D_n$, $n \in \NN$, be discrete, divisible groups such that $G=R \oplus \prod_n D_n$. If both $R$ and $\prod_n D_n$ are tame, then $G$ is tame. Otherwise, $G$ is not tame nor relatively tame.
\end{theorem}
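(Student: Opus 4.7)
For the non-tame direction: suppose $R$ is not tame (the case of $D := \prod_n D_n$ not tame is symmetric via Theorem \ref{th:DiGa}). By Theorem \ref{th:Main:Red}, $R$ is also not relatively tame, so I would choose continuous $R$-actions $\alpha_R,\beta_R$ on Polish spaces with $E_{\alpha_R},E_{\beta_R}$ Borel but $E_{\alpha_R \times \beta_R}$ non-Borel. The topological direct sum $G = R \oplus \prod_n D_n$ supplies a continuous projection $\pi: G \to R$, and the lifts $\alpha := \alpha_R \circ (\pi \times \Id)$, $\beta := \beta_R \circ (\pi \times \Id)$ are continuous $G$-actions with identical orbit equivalence relations. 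Hence $G$ fails relative tameness.

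For the tame direction: assume $R$ and $D$ are both tame; since the locally compact case of $G$ is automatic, assume $G$ is non-locally compact. If $R$ is non-locally compact, by Theorem \ref{th:Main:Red}(3) there is an open quasi-torsion $H \leq R$ with locally compact Sylow subgroups; since tameness passes up from open subgroups of countable index, I may replace $G$ by $H \oplus D$ and thereby assume $R$ itself is quasi-torsion with locally compact Sylow subgroups. Replaying the construction in the proof of Theorem \ref{th:Main:Red}(3) produces a compact $K \leq R$ and an adequate family $\{G'_n\}$ for $R/K$ such that $\prod_n G'_n$ is in the Solecki form of Theorem \ref{th:Sol2}; in the remaining case, with $R$ locally compact, I take $K \leq R$ to be any compact open subgroup, whence $R/K$ is discrete countable and again trivially fits into a Solecki-form product. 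Pairing $\{G'_n\}$ index by index with an adequate family $\{D_n\}$ for $D$ realizes $G/K \cong (R/K) \oplus D$ as a closed subgroup of $\prod_n (G'_n \oplus D_n)$.

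Tameness of $D$ via Theorem \ref{th:Sol2} forces each $D_n$, for all but finitely many $n$, to be torsion divisible with $p$-Sylow of the form $\ZZ(p^\infty)^{l}$ (and trivial for all but finitely many $p$); combined with the Solecki form on $G'_n$, each $G'_n \oplus D_n$ is in the Solecki form for all but finitely many $n$. So $\prod_n (G'_n \oplus D_n)$ is tame by Theorem \ref{th:Sol2}, and hence so is its closed subgroup $G/K$. To finish, I would apply the hyperspace argument from the proof of Theorem \ref{th:Main:Red}(3) verbatim: for any continuous $G$-action $\alpha$ on a Polish $X$, the set of $K$-orbits $O \subseteq \mathcal{K}(X)$ is Borel, $G/K$ acts continuously on $O$ (realized as a Polish $(G/K)$-space via \cite[Theorem 5.2.1]{Be}), tameness of $G/K$ makes the induced orbit equivalence relation Borel, and pulling back through the continuous map $x \mapsto [x]_K$ shows that $E_\alpha$ is Borel. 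The main obstacle I anticipate is the bookkeeping in pairing the adequate families: one must verify that the combined family is adequate for $G/K$ and that the Sylow structures contributed from each side of the direct sum combine into the Solecki form factor by factor.
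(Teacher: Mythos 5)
Your proposal is correct and, in the tame direction, takes essentially the same route as the paper: reduce via Theorem \ref{th:Main:Red}(3) (and passing tameness up from an open subgroup) to the case where $R$ is quasi-torsion with locally compact Sylow subgroups, extract the compact $K \leq R$ and the adequate family for $R/K$ as in that proof, verify Solecki's criterion (Theorem \ref{th:Sol2}) for the combined product, and transfer tameness from $G/K$ to $G$ by the hyperspace argument; the paper writes the combined group as $\prod_n R'_n \oplus \prod_n D_n$ rather than pairing factors index by index, which is the same thing up to regrouping. Your explicit projection-lift argument for the ``otherwise'' direction, and your explicit treatment of the case where $R$ is locally compact, fill in steps the paper leaves implicit; note also that you need not worry about proving non-tameness separately, since a tame group is trivially relatively tame (a diagonal action is itself a continuous action), so failure of relative tameness already yields failure of tameness. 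One caveat: your parenthetical claim that tameness of $\prod_n D_n$ forces the $p$-Sylow subgroups of $D_n$ to be trivial for all but finitely many $p$ is false --- take $D_n = \bigoplus_p \ZZ(p^\infty)$ for every $n$; this product is tame by Theorem \ref{th:Sol2}, yet every $D_n$ has a non-trivial $p$-Sylow subgroup for every prime $p$. This does not damage your argument, because what you actually need is the per-prime statement: for each fixed $p$, for all but finitely many $n$, both the $p$-Sylow subgroup of $G'_n$ and that of $D_n$ have the form $F \oplus \ZZ(p^\infty)^k$ with $F$ finite; this is exactly what Theorem \ref{th:Sol2} provides, the intersection of two cofinite sets of indices is cofinite, and the $p$-Sylow subgroup of $G'_n \oplus D_n$ is the direct sum of the two $p$-Sylow subgroups, so the Solecki hypothesis holds for $\prod_n (G'_n \oplus D_n)$. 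Just be careful to state the quantifiers in this order (for each $p$, for cofinitely many $n$) rather than per factor.
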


\begin{proof}
Suppose that $R$ and $\prod_n D_n$ are tame. In particular, $\prod_n D_n$ satisfies the assumptions of Theorem \ref{th:Sol2}. By Theorem \ref{th:Main:Red}, we can assume without loss of generality that $R$ is quasi-torsion, and all Sylow subgroups of $R$ are locally compact. Arguing exactly as in the proof of Theorem \ref{th:Main:Red}, we find a compact $K \leq R$, and an adequate family $\{R'_n\}$ for $R/K$ such that $\pi_n[S_p]=\{0\}$ for every $p$, and almost all $n$. But this implies that $\prod_n R'_n \oplus \prod_n D_n$ satisfies the assumptions of Theorem \ref{th:Sol2}, and so $G/K$ is tame. Again, we get that $G$ is tame as well.
\end{proof}

\end{document}